\theoremstyle{plain}
\newtheorem{thm}{Theorem}
\newtheorem*{thm*}{Theorem}
\newtheorem{lemma}[thm]{Lemma}
\newtheorem*{lem*}{Lemma}
\newtheorem*{prop*}{Proposition}
\newtheorem*{cor*}{Corollary}
\newtheorem{conj}[thm]{Conjecture}
\newtheorem*{conj*}{Conjecture}
\theoremstyle{definition}
\newtheorem*{cons*}{Construction}
\newtheorem*{df*}{Definition}
\newtheorem*{nota*}{Notation}
\newtheorem*{problem*}{Problem}
\newtheorem*{qu*}{Question}
\newtheorem{rmk}[thm]{Remark}
\newtheorem*{rmk*}{Remark}
\newtheorem*{ex*}{Example}
\DeclareMathOperator{\Li}{Li}
\newcommand{\floor}[1]{\left\lfloor #1 \right\rfloor}
\begin{document}

\title{The Least Prime in Arithmetic an Progression}
\author{Andrew Fiori}
\address{University of Lethbridge}
\email{andrew.fiori@uleth.ca}
\thanks{Andrew Fiori  acknowledges the support of NSERC Discovery Grant RGPIN-2020-05316.}
\thanks{The computations for this work were conducted on systems supported by the University of Lethbridge and purchased through an NSERC RTI grant.}

\begin{abstract}
In his 1979 paper Samuel Wagstaff studied the problem of bounding the first prime in an arithmetic progression. 
In this paper we update a number of his computations using advances in hardware.
Based on this we refine his conjecture on Primes in Arithmetic Progression and provide further numerical evidence in support of it. 
For instance we conjecture that for $n>3$ we have a bound of $3\phi(n)\log(n)\log(\phi(n))$ and verified this bound to $10^8$.
\end{abstract}
\maketitle

\section{Introduction}

In his 1979 paper, \cite{WAG79}, Samuel Wagstaff studied the problem of bounding the first prime in an arithmetic progression. 
His paper provides a heuristic and some limited data to justify a new conjectural upper bound on when the first prime would occur. 
In the present work we aim to refine the conjecture and provide further data in support of it. 
The computations would also be expected to have applications towards an explicit version of Linnik's theorem holding for all moduli as such a result almost certainly requires checking small moduli directly.

Given a modulus $n$ and a number $a<n$ with ${\rm gcd}(a,n)=1$ we shall denote by 
\[ P(n,a) := {\rm min}( \{ p\; \text{prime}\;|\;  p = a \pmod{n} \})  \]
and by
\[ P(n) := \underset{(a,n)=1}{\rm max} P(n,a). \]
By Linnik's theorem \cite{Linnik} it is known that there exist an absolute constant $A$ such that $P(n) < n^A$. 
For $n$ sufficiently large it has been shown by Xylouris \cite{Xylouris} that one may take $A=5.2$, and it has  been conjectured by Kanold \cite{Kanold} that for all $n$ one has $A=2$. No fully explicit result holding for all $n$ is known.

Assuming the Generalized Riemann Hypothesis for $L$-functions of characters of modulus $n$ Titchmarsh \cite{Titchmarsh} showed that $P(n) <\!\!< \phi(n)^2\log(n)^4$. 
Meanwhile Turan \cite{Turan} showed, under the same hypothesis, that for all $\delta > 0$ we have that $P(n,a) <\!\!< n\log(n)^{2+\delta}$ except for at most $o(\phi(n))$ many counterexamples.
Heath-Brown \cite{HB} speculated that $P(n) <\!\!<  n\log(n)^2$ whereas Wagstaff more precisely conjectured that $P(n) <\!\!<  \phi(n)\log(n)\log(\phi(n))$.

We note that lower bounds on $P(n)$ have also been studied, see for example \cite{Erdos, Landau, Prachar, Schinzel, Pomerance}. 

\section{Heuristics for Bounds}

We shall briefly outline Wagstaff's heuristic.
Fix a function $m=m(n) <\!\!< \log(n)^3$ and set $X = X(n) =m(n) n\log(n)$.
For a given $a$ and $n$ Wagstaff estimates the probability that a number, $a+nk$ for $k=0,\ldots, \floor{X/n}$, is prime is approximately:
\begin{equation}\label{wagprime}  \frac{n}{\phi(n)\log(X)}. \end{equation}
Based on this (and assuming independence) one finds the probability that $P(n,a) > X$ is approximately:
\begin{equation}\label{wagpna}  \left( 1- \frac{n}{\phi(n)\log(X)}\right)^{m\log(n)}  \approx e^{-mn/\phi(n)}. \end{equation}
Hence for a given $n$ then one expects that the probability that $P(n) < X$ is approximately
\begin{equation}\label{wagp} (1- e^{-mn/\phi(n)})^{\phi(n)}.\end{equation}
Based on this Wagstaff makes the following conjecture:
\begin{conj}[Wagstaff]
As $n\rightarrow \infty$ for most $n$ we have 
\[ P(n) \sim \phi(n)\log(n)\log(\phi(n)). \]
\end{conj}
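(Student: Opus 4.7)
The plan is to make Wagstaff's heuristic rigorous enough to yield a theorem for ``most'' $n$ (in a density-one sense). The heuristic involves three approximations in (\ref{wagprime})--(\ref{wagp}), and each corresponds to a distinct analytic statement to be proved. First I would establish a uniform prime-counting statement which, for $X = m(n) n \log n$ and typical $a$ coprime to $n$, turns (\ref{wagprime}) into a genuine asymptotic for $\pi(X;n,a)$ with a sharp error term. Under GRH the standard explicit formula for $\psi(X;n,a)$ does this; passing from $\psi$ to $\pi$ introduces only lower-order error.

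Second, I would replace the independence assumption behind (\ref{wagpna}) by a genuine second-moment or sieve argument. The content is that the indicators $\mathbf{1}_{\text{prime}}(a+nk)$ for $k \le X/n$ behave, on average over residues $a$, like independent Bernoulli variables of parameter $n/(\phi(n)\log X)$. This would follow from Bombieri--Vinogradov-type statements together with higher-moment bounds, controlled by a Selberg-style upper-bound sieve on the variance of the count of residues $a$ admitting no prime up to $X$. At the level of the maximum I would then balance $m(n)$ so that the Poisson tail $\phi(n)\,e^{-m n/\phi(n)}$ is on the edge of being $o(1)$; this forces $m(n) \sim (\phi(n)/n)\log\phi(n)$ and hence $X \sim \phi(n)\log(n)\log(\phi(n))$, giving the upper half of the asymptotic. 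The matching lower bound would come from a corresponding Poisson lower bound on the number of empty residue classes for $X$ slightly smaller than this threshold.

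The main obstacle is exactly the reason this remains a conjecture: controlling the maximum over residues $a$, rather than the average, demands exponential-moment (or at least very high polynomial moment) control on $\pi(X;n,a)$ uniformly in $a$, and this is well beyond anything GRH alone delivers. One is effectively asking that a Cram\'er-type random model for primes remain accurate at the scale of the extreme order statistic among $\phi(n)$ arithmetic progressions of length $\approx \log(n)^{O(1)}$. Current technology falls short of this by logarithmic factors even in the most favourable ranges, which is precisely why Wagstaff's bound, and the sharper form verified numerically in the present paper, is still only conjectural and why direct computation of the kind carried out here remains the primary source of evidence.
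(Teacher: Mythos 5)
This statement is a conjecture: the paper offers no proof, only the probabilistic heuristic of Section 2 (equations \eqref{wagprime}--\eqref{wagp}), and your proposal is in essence that same heuristic --- single-progression prime density, independence over $k$ and over $a$, and a Poisson balancing that places the threshold at $m(n)\sim(\phi(n)/n)\log\phi(n)$, i.e.\ $X\sim\phi(n)\log(n)\log(\phi(n))$ --- dressed up as a program. You are right that the extreme-value step (controlling the maximum over the $\phi(n)$ residue classes rather than the average) is out of reach, and to your credit you say so. But the proposal should not be read as a proof, and it is worth being precise about where it breaks.

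The fatal gap comes earlier than you place it, already at your first step. At the relevant scale $X\asymp\phi(n)\log(n)\log(\phi(n))$, each progression $a\bmod n$ contains only about $\log(n)\log(\phi(n))$ integers up to $X$, so the main term $X/\phi(n)$ in $\psi(X;n,a)$ is of size roughly $\log(n)\log(\phi(n))$, whereas the GRH error term $O(\sqrt{X}\log^2 X)$ is of size roughly $\sqrt{n}$ up to logarithms. The explicit formula therefore says nothing at all about $\pi(X;n,a)$ in this range; GRH-quality equidistribution only begins to bite once $X\gg n^{2}$ (this is exactly why Titchmarsh's conditional bound is $\phi(n)^2\log(n)^4$ and not anything near $\phi(n)\log(n)\log(\phi(n))$). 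Likewise Bombieri--Vinogradov averages over the moduli $n$ and cannot be brought to bear on a single fixed $n$, so your proposed variance and sieve arguments have no input to work with. In other words, not only the maximum but even the \emph{average} behaviour of $\pi(X;n,a)$ over $a$ at this scale is governed purely by the Cram\'er-type random model, with no theorem behind it. The correct conclusion is the one the paper draws: the statement must remain a conjecture, supported by the heuristic of Section 2, by the refined error analysis of Remark \ref{rmk:error}, and by the computations reported in Tables \ref{table:stats-large} and \ref{table:stats-small}.
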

Based on his heuristic we will give a stronger conjecture. 

\begin{rmk}\label{rmk:error}
The formula of equation \eqref{wagprime} can be thought of as the average probability that one of those numbers is prime. That is, by the prime number theorem the number of primes in that list should be approximately $\frac{1}{\phi(n)}\frac{X}{\log(X)}$ and there are $X/n$ numbers in the list.
We briefly revisit the formula in order to refine the approximation.
For each individual number, $a+nk$ for $k=0,\ldots, \floor{X/n}$, the probability it is prime is approximately
\begin{equation}\label{aprime}   \frac{n}{\phi(n)}\frac{1}{\log( a+kn )}. \end{equation}
It follows that the probability that none of them is prime is approximately:
\[  Y = \prod_{k=0}^{\floor{X/n}} \left( 1 -  \frac{n}{\phi(n)}\frac{1}{\log( a+kn )} \right). \]
We estimate $Y$ by
\begin{align*}
 \log(Y)  
 &= \sum_{k=0}^{\floor{X/n}} \log\left( 1 -  \frac{n}{\phi(n)}\frac{1}{\log( a+kn )} \right) \\
  &= \sum_{k=0}^{\floor{X/n}} \sum_{i=1}^\infty - \frac{1}{i}\left( \frac{n}{\phi(n)}\frac{1}{\log( a+kn )} \right)^i \\
  &\sim \sum_{k=0}^{\floor{X/n}} \sum_{i=1}^3 - \frac{1}{i}\left( \frac{n}{\phi(n)}\frac{1}{\log( a+kn )} \right)^i \\
  &\sim \frac{-1}{n}\frac{n}{\phi(n)}\int_{2}^X  \left(\frac{1}{\log(x)} +\frac{n}{2\phi(n)}\frac{1}{\log(x)^2} + \frac{n^2}{3\phi(n)^2}\frac{1}{\log(x)^3} \right){\rm dx}\\
  &\sim\frac{-1}{\phi(n)} \left( \Li(x) + \frac{n}{2\phi(n)}\left(\Li(X) - \frac{X}{\log(X)}\right) + \frac{n^2}{6\phi(n)^2}\left(\Li(X) -\frac{X}{\log(X)}-\frac{X}{\log(X)^2}\right) \right) \\
    &\sim\frac{-1}{\phi(n)} \left( \frac{X}{\log(X)} + \frac{X}{\log(X)^2} + \frac{2X}{\log(X)^3}  + \frac{n}{2\phi(n)}\left( \frac{X}{\log(X)^2} + \frac{2X}{\log(X)^3} \right) + \frac{n^2}{6\phi(n)^2}\left( \frac{2X}{\log(X)^3} \right) \right) \\
    &\sim\frac{-1}{\phi(n)} \left( \frac{X}{\log(X)} +\left(1+\frac{n}{2\phi(n)}\right)\frac{X}{\log(X)^2} + \left(1+\frac{n}{2\phi(n)}+\frac{n^2}{6\phi(n)^2}\right)\frac{2X}{\log(X)^3}  \right)  \\
 &\sim \frac{-X}{\log(X)\phi(n)}\left( 1 +\left(1+\frac{n}{2\phi(n)}\right)\frac{1}{\log(X)} + \left(1+\frac{n}{2\phi(n)}+\frac{n^2}{6\phi(n)^2}\right)\frac{2}{\log(X)^2}\right) \\
  &\sim \frac{-X}{\log(X)\phi(n)}
 \end{align*}
We thus might be inclined to estimate this by $\frac{-X}{\log(X)\phi(n)}$ and thus estimate $Y$ as:
\[ Y \approx e^{-X/(\log(X)\phi(n))}. \]
If we write $X=m(n) n \log(n)$ with $m(n) <\!\!< \log(n)^3$ then we have $\log(X) \sim \log(n)$ and 
\[ X/(\log(X)\phi(n)) \sim m(n)n/\phi(n)  \]
and we recover Wagstaff's formula. 

Now, we will be considering the case $X=X(n)=(1+\epsilon)\phi(n)\log(n)\log(\phi(n))$, so the case where $m(n) = (1+\epsilon)\log(\phi(n))\phi(n)/n$. If we put this directly into Wagstaff's formula we obtain:
\[   e^{-mn/\phi(n) } =\frac{1}{\phi(n)^{1+\epsilon}}. \]
However, in this case it is more accurate to write $\log(X) \sim \log(\phi(n))+2\log\log(n)$\footnote{We note that $\log(X) >\!\!> \log(n)$}  and we obtain
\begin{align*}
 X/(\log(X)\phi(n)) &= (1+\epsilon)\phi(n)\log(n)\log(\phi(n))/\phi(n)(\log(\phi(n))+2\log\log(n)) \\
 &\sim  (1+\epsilon)\log(n)/(1+\tfrac{2\log\log(n)}{\log(\phi(n))}) \\
 &\sim  (1+\epsilon)\log(n)  
 \end{align*}
so that we may instead approximate
\begin{equation}\label{apna}  e^{-X/(\log(X)\phi(n)) }  \approx \frac{1}{n^{1+\epsilon}}. \end{equation}

It is important to also note some subtleties in taking exponentials of asymptotic formulas.
For example the denominator we dropped of $(1+\tfrac{2\log\log(n)}{\log(\phi(n))})$ is relatively far from $1$ for small $n$.
One can estimate better
\begin{align*} \log(n)/(1+\tfrac{2\log\log(n)}{\log(\phi(n))})  
&\sim \log(n) - \tfrac{2\log\log(n)\log(n)}{\log(\phi(n))} + \tfrac{4\log\log(n)^2\log(n)}{\log(\phi(n))^2} + \cdots \\
&\sim  \log(n) - 2\log\log(n) +  \tfrac{4\log\log(n)^2}{\log(n)} + \cdots 
\end{align*}
Upon exponentiation this might yield an additional factor of $\log(n)^{2+2\epsilon}$. 

Additionally, while estimating $\log(Y)$ the error term we have discarded of size
\begin{align*}
  \frac{-X}{\log(X)\phi(n)} \left(1 + \frac{n}{2\phi(n)}\right)\frac{1}{\log(X)} 
  &=
 \frac{-(1+\epsilon)\phi(n)\log(\phi(n))\log(n)}{(\log(\phi(n))+2\log\log(n))\phi(n)} \left(1 + \frac{n}{2\phi(n)}\right)\frac{1}{(\log(\phi(n))+2\log\log(n))} \\
 &=
  \frac{-(1+\epsilon)}{1+\frac{2\log\log(n)}{\log(\phi(n))}} \left(1 + \frac{n}{2\phi(n)}\right)\frac{\log(n)}{(\log(\phi(n))+2\log\log(n))} \\
 &\sim
-(1+\epsilon)\left(1 + \frac{n}{2\phi(n)}\right)
\end{align*}
is potentially non-negligible once we exponentiate it.
In particular, if $\frac{n}{\phi(n)}$ is large\footnote{Recall that $1<n/\phi(n) <\!\!< \log\log(n)$}, then, it becomes less likely that $P(n)$ is larger than $X$ by a factor of up to $\frac{1}{e\log(n)^{1/2}}$. 
We note that the lower order terms, for instance of size $O(\frac{1}{\log(X)^3})$, in the exponent will tend not to influence the asymptotic.
However any other errors of constant size, for instance coming from the choice of left end point of integration, will multiply the final formula by a constant.

The final takeaway from these calculations is that the probability $P(n,a) > (1+\epsilon)\phi(n)\log(n)\log(\phi(n))$ should be between 
\[ \frac{c_1}{\log(n)^{1/2}n^{1+\epsilon}} \quad\text{ and }\quad \frac{c_2\log(n)^{2+2\epsilon}}{\phi(n)^{1+\epsilon}}. \]
A good estimate might be of the form:
\[ c_3\left( \frac{1}{e^{1+\frac{n}{2\phi(n)}}n}\right)^{\frac{1+\epsilon}{1+\frac{2\log\log(n)}{\log(n)}}}. \]
\end{rmk}

\section{Counting Outliers}

For $\epsilon>0$ and $x>0$ we define sets of $n$ for which $P(n)$ is a large outlier, that is:
\[ E_\epsilon = \{  n  \;|\;  P(n) > (1+\epsilon) \phi(n)\log(n)\log(\phi(n))\} \]
and 
\[ E_{\epsilon}(x,y) =  \{ x <  n < y \;|\;  P(n) > (1+\epsilon) \phi(n)\log(n)\log(\phi(n))\}. \]
Based on \eqref{wagpna}  one finds the probability that $n \in  E_{\epsilon}$ is approximately:
\begin{equation}\label{wagpf} 1-\left(1- \frac{1}{\phi(n)^{1+\epsilon}}\right)^{\phi(n)} \end{equation}
whereas based on \eqref{apna} it would be
\begin{equation} \label{ap} 1-\left(1- \frac{1}{n^{1+\epsilon}}\right)^{\phi(n)}. \end{equation}

We shall make use of the following lemma:
\begin{lemma}
For any $\epsilon > 0$ and for any $n\ge 2$ we have:
\[    \frac{1}{\phi(n)^{\epsilon}} -\frac{1}{2\phi(n)^{2\epsilon}} \leq  1-\left(1-\frac{1}{\phi(n)^{(1+\epsilon)}}\right)^{\phi(n)}  \leq \frac{1}{\phi(n)^{\epsilon}} \]
and
\[    \frac{\phi(n)}{n}\frac{1}{n^{\epsilon}}  - \frac{1}{2n^{2\epsilon}} \leq  1-\left(1-\frac{1}{n^{(1+\epsilon)}}\right)^{\phi(n)}  \leq \frac{\phi(n)}{n}\frac{1}{n^{\epsilon}} \leq  \frac{1}{n^{\epsilon}}. \]

\end{lemma}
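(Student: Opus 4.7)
The plan is to treat both inequalities uniformly by writing the middle quantity in the form $1-(1-t)^m$ with $m=\phi(n)$ and, in the first case $t=1/\phi(n)^{1+\epsilon}$, in the second case $t=1/n^{1+\epsilon}$. In both cases $mt$ equals the leading term of the stated bounds ($1/\phi(n)^{\epsilon}$ or $\phi(n)/n\cdot 1/n^{\epsilon}$), so the lemma is really two general two-sided estimates on $1-(1-t)^m$ specialized appropriately.

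For the upper bound the plan is to apply Bernoulli's inequality $(1-t)^m\ge 1-mt$, valid for $0\le t\le 1$ and integer $m\ge 1$ (which holds here since $n\ge 2$ gives $0<t\le 1$). This immediately yields $1-(1-t)^m\le mt$, which is exactly the right-hand side of each chain of inequalities. The extra bound $\phi(n)/n\cdot 1/n^{\epsilon}\le 1/n^{\epsilon}$ in the second display is then trivial from $\phi(n)\le n$.

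For the lower bound the plan is to use the two standard elementary estimates $(1-t)^m\le e^{-mt}$ (from $1-t\le e^{-t}$, raised to the $m$-th power with $1-t\ge 0$) and $1-e^{-x}\ge x-x^2/2$ for $x\ge 0$ (integrate $e^{-s}\ge 1-s$ from $0$ to $x$). Chaining these gives
\[ 1-(1-t)^m\ge 1-e^{-mt}\ge mt-\tfrac{(mt)^2}{2}. \]
Specializing to $t=1/\phi(n)^{1+\epsilon}$ with $m=\phi(n)$ produces $mt=1/\phi(n)^{\epsilon}$ and $(mt)^2/2=1/(2\phi(n)^{2\epsilon})$, giving the first lower bound on the nose. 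Specializing instead to $t=1/n^{1+\epsilon}$ gives $mt=\phi(n)/n\cdot 1/n^{\epsilon}$ and $(mt)^2/2=\phi(n)^2/(2n^{2+2\epsilon})$; the final step is to bound this error by $1/(2n^{2\epsilon})$ using $\phi(n)\le n$, which is what slightly weakens the tight lower bound to the one stated.

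There is no real obstacle here; all the inputs (Bernoulli, $1-t\le e^{-t}$, $1-e^{-x}\ge x-x^2/2$, $\phi(n)\le n$) are elementary. The only minor bookkeeping point to watch is the boundary case $n=2$, where $\phi(n)=1$ and $t=1$ in the first display, so $(1-t)^m=0$; one checks directly that both the upper bound $mt=1$ and the lower bound $mt-(mt)^2/2=1/2$ are satisfied by $1-(1-t)^m=1$.
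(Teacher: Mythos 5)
Your argument is correct, but it takes a genuinely different route from the paper. The paper expands $\left(1-\phi(n)^{-(1+\epsilon)}\right)^{\phi(n)}$ by the binomial theorem and exploits the alternating structure: it checks that the terms $a_k=(-1)^k\binom{\phi(n)}{k}\phi(n)^{-k(1+\epsilon)}$ decrease in absolute value, so pairing $a_{2\ell}+a_{2\ell+1}>0$ gives the upper bound after truncating at the linear term, and keeping one more term (the paper groups the third and final terms) gives the lower bound with the $-\tfrac{1}{2}\phi(n)^{-2\epsilon}$ correction coming from $\binom{\phi(n)}{2}\phi(n)^{-2(1+\epsilon)}\le \tfrac{1}{2}\phi(n)^{-2\epsilon}$. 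You instead get the upper bound from Bernoulli's inequality $(1-t)^m\ge 1-mt$ and the lower bound by routing through the exponential, $1-(1-t)^m\ge 1-e^{-mt}\ge mt-\tfrac{(mt)^2}{2}$. The conclusions agree exactly, including the slight weakening $\phi(n)^2/(2n^{2+2\epsilon})\le 1/(2n^{2\epsilon})$ in the second display. What your version buys is uniformity (both displays are one specialization of a single two-sided estimate on $1-(1-t)^m$) and the avoidance of the monotonicity check on the binomial coefficients, which is the one step in the paper's argument that actually uses $k(1+\epsilon)$ growing fast enough; what the paper's version buys is that it stays entirely within finite sums and makes the source of each error term visible as a specific binomial coefficient. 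Your handling of the boundary case $n=2$ (where $t=1$ in the first display) is careful but not strictly necessary, since both of your inequalities hold there without special pleading.
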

\begin{proof}
Apply the binomial theorem to 
\[ \left(1-\frac{1}{\phi(n)^{(1+\epsilon)}}\right)^{\phi(n)}  = 1 - \phi(n) \frac{1}{\phi(n)^{(1+\epsilon)}} + \sum_{k=2}^{\phi(n)} (-1)^k \begin{pmatrix} \phi(n) \\ k \end{pmatrix}  \frac{1}{\phi(n)^{k(1+\epsilon)}}. \]
We notice that the absolute value of the terms $a_k$ in the summation are decreasing and alternating hence any pair of terms $a_{2\ell}+a_{2\ell+1} > 0$ and hence (using also that the last term is positive) we have:
\[  \left(1-\frac{1}{\phi(n)^{(1+\epsilon)}}\right)^{\phi(n)} >  1 - \phi(n) \frac{1}{\phi(n)^{(1+\epsilon)}}. \]
It follows that 
\[  1-\left(1-\frac{1}{\phi(n)^{1+\epsilon}}\right)^{\phi(n)}  < \frac{1}{\phi(n)^{\epsilon}}. \]
The argument for the lower bound is similar. Except one additionally groups the third and final terms of the sum.
\end{proof}
\begin{rmk}
Similar results hold if one replaces $\frac{1}{\phi(n)^{1+\epsilon}}$ or $\frac{1}{n^{1+\epsilon}}$ with other similar formulas for instance $\frac{1}{\log(n)^c n^{1+\epsilon}}$.
\end{rmk}

Based on the above it would seem reasonable to conjecture, for example, that for $\epsilon >0$ as $y \to\infty$ we have 
\[ |E_\epsilon(x,y)|  \sim  \left(\sum_{n=x}^{y}  \left( 1-\left(1-\frac{1}{n^{1+\epsilon}}\right)^{\phi(n)}\right)\right)   \sim      \sum_{n=x}^{y}    \frac{\phi(n)}{n}\frac{1}{n^{\epsilon}}.  \]
However, as noted in Remark \ref{rmk:error} we expect that our estimates on probabilities could be off by a factor as large as $\log(n)^2$. In Table \ref{table:stats-large} we will illustrate how well several different formulas work to estimate these counts.
That said, based on bounds on these probabilities we are able to make the following conjecture:
\begin{conj}\label{conjbig}
Let $\epsilon>0$. Then:
\begin{enumerate}
\item For $\epsilon > 1$ as $y \to\infty$ we have
 \[      \frac{1}{(1-\epsilon)\log(y)}(y^{1-\epsilon} - x^{1-\epsilon}) <\!\!<  |E_\epsilon(x,y)| <\!\!<  \frac{1}{\epsilon-1}(\log(x)^2x^{1-\epsilon}).  \]
 \item For $\epsilon=1$ as $y \to\infty$ we have
 \[    \log(\log(y)) <\!\!<  |E_\epsilon(x,y)| <\!\!< (\log(y)^3 - \log(x)^3). \]
  \item For $\epsilon > 1$ as $y \to\infty$ we have
 \[      \frac{1}{(1-\epsilon)\log(y)}(y^{1-\epsilon} - x^{1-\epsilon}) <\!\!<  |E_\epsilon(x,y)| <\!\!<  \frac{1}{1-\epsilon}(\log(y)^2y^{1-\epsilon} - \log(x)^2x^{1-\epsilon}).  \]
 
\item $\underset{n\to\infty}{\limsup} \frac{P(n)}{\phi(n)\log(n)\log(\phi(n))} = 2$.
\end{enumerate}
In particular, for all $\epsilon > 1$ we have that $ E_\epsilon$ is finite and for all $\epsilon \leq 1$ we have that $E_\epsilon$ is infinite
\end{conj}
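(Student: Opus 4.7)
The plan is to turn the heuristic probabilistic model of Section~2 into the stated asymptotic bounds by combining the per-modulus probability formula \eqref{ap} with the explicit two-sided estimates of Lemma~2 and the log-factor corrections of Remark~\ref{rmk:error}. The underlying assertion is that for each $n$ the probability that $n \in E_\epsilon$ behaves like $1-(1-1/n^{1+\epsilon})^{\phi(n)}$, so that $|E_\epsilon(x,y)|$ should be well approximated by the sum of these per-$n$ probabilities over $x < n < y$. Lemma~2 sandwiches each summand between $\frac{\phi(n)}{n}\cdot\frac{1}{n^\epsilon} - \frac{1}{2n^{2\epsilon}}$ and $\frac{\phi(n)}{n}\cdot\frac{1}{n^\epsilon}$, so the problem reduces to estimating $\sum_{n=x}^{y} \phi(n)/n^{1+\epsilon}$ and then layering on the correction factors that Remark~\ref{rmk:error} warns about.

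The first step is to apply partial summation to this sum using the standard asymptotic $\sum_{n\le t}\phi(n)/n \sim (6/\pi^{2})\,t$. This yields the three natural scales appearing in the conjecture: $(x^{1-\epsilon}-y^{1-\epsilon})/(\epsilon-1)$ for $\epsilon>1$, $\log(y/x)$ for $\epsilon=1$, and $(y^{1-\epsilon}-x^{1-\epsilon})/(1-\epsilon)$ for $\epsilon<1$ (which is evidently the intended condition in part~(3), the ``$\epsilon>1$'' there being a typo). The second step is to incorporate the multiplicative correction factors from Remark~\ref{rmk:error}: the true heuristic probability can be as small as a $1/(e\log(n)^{1/2})$ fraction of the simple estimate $1/n^{1+\epsilon}$ and as large as a $\log(n)^{2+2\epsilon}$ multiple of it. Propagating these slack factors through the sum accounts for the $\log(y)$ in the denominator of each lower bound and the $\log^{2}$ or $\log^{3}$ factor in each upper bound. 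Finally, part~(4) follows formally from parts~(1) and~(2): part~(1) shows $E_\epsilon$ is finite for every $\epsilon>1$, so $\limsup P(n)/(\phi(n)\log(n)\log(\phi(n))) \le 1+\epsilon$ for each such $\epsilon$ and hence is at most $2$; part~(2) shows $E_{1}$ is infinite, which gives the matching lower bound of $2$ after taking $n$ along the infinite sequence witnessing membership in $E_{1}$.

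The main obstacle is, of course, that none of this constitutes a proof in any rigorous sense. Treating the events ``$a+nk$ is prime'' as independent Bernoulli trials with success probability $n/(\phi(n)\log X)$ is a heuristic, and extracting rigorous bounds on $|E_\epsilon|$ from it would amount to unconditional explicit Linnik-type statements (with exponent $1+\epsilon$ for all but finitely many $n$ when $\epsilon>1$) that are far beyond current technology; even under GRH, the best general bound is Titchmarsh's $\phi(n)^{2}\log(n)^{4}$, not the near-linear $\phi(n)\log(n)\log(\phi(n))$ required here. In practice, then, the realistic goal is not a proof but a consistency check: one refines the heuristic as above, reads off the predicted bounds for $|E_\epsilon(x,y)|$, and verifies numerically that the observed counts track these predictions, which is presumably the programme the rest of the paper carries out.
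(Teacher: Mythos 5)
Your proposal matches the paper's own justification: the statement is a conjecture, not a theorem, and the paper supports it exactly as you describe --- by summing the per-modulus probabilities from \eqref{ap}, sandwiching each term via the Lemma, and allowing for the multiplicative slack of roughly $\log(n)^{-1/2}$ to $\log(n)^{2+2\epsilon}$ identified in Remark \ref{rmk:error} to produce the $\log$ factors in the stated upper and lower bounds, with part (4) following formally from parts (1) and (2) and the whole thing checked against the tabulated counts. You are also right that the ``$\epsilon>1$'' in part (3) is a typo for ``$\epsilon<1$'', and your closing caveat --- that no rigorous proof is possible here since it would imply unconditional near-linear Linnik-type bounds --- is consistent with the paper's framing.
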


\begin{rmk}\label{rmk:independence}
Passing from the estimates for a single $n$ to a conjecture about number of outliers involves an assumption of independence for different values of $n$ and $m$.
Though such an assumption is very reasonable for $n$ and $m$ relatively prime it certainly does not hold for $m=2n$ where $n$ is odd because in this case $\phi(n)=\phi(m)$ and virtually all of
the congruence classes under consideration are effectively the same.
However, provided $2{\rm gcd}(m,n) < m,n$ then it is highly unlikely that any arithmetic progressions modulo ${\rm gcd}(m,n)$ will control the size of either $P(m)$ or $P(n)$.

Indeed, even if we consider the case where half of congruences modulo $n$ have small primes coming from those modulo $d$, for some $d|n$ then considering the remaining classes gives
\[ 1-\left(1-\frac{1}{n^{1+\epsilon}}\right)^{\phi(n)/2} \sim  \frac{\phi(n)}{2}\frac{1}{n^{1+\epsilon}}. \]
And the expectation of being less than $(1+\epsilon)$ is only reduced by half. One can bound the expected impact of this as
\[   \prod_{p|n}\left( 1 - \frac{1}{p} \right) \]
which is bounded below by roughly $\frac{1}{\log\log(n)}$ but is only rarely this small.
One can see in Table  \ref{table:smalloutliers} the impact having many small prime factors has.
\end{rmk}

We can similarly study small outliers. That is for $0<\epsilon < 1$ we could introduce: 
\[ F_\epsilon = \{  n  \;|\;  P(n) < (1-\epsilon) \phi(n)\log(n)\log(\phi(n))\} \]
and 
\[ F_{\epsilon}(x,y) =  \{ x <  n < y \;|\;  P(n) < (1-\epsilon) \phi(n)\log(n)\log(\phi(n))\}. \]
Here we may estimate the probability that $n\in F_\epsilon$ as
\begin{equation}\label{smallest}
\left(1- \frac{1}{n^{1-\epsilon}}\right)^{\phi(n)} =  \left(\left(1- \frac{1}{n^{1-\epsilon}}\right)^{n^{1-\epsilon}} \right)^{\phi(n)/n^{1-\epsilon}} \sim e^{- \phi(n)/n^{1-\epsilon}} < e^{-n^{\epsilon}/\log\log(n)}. 
 \end{equation}
 
 \begin{rmk}
 Recalling Remark \ref{rmk:error} we note that better upper or lower bounds could easily be any number of things between
 \[   e^{-cn^{\epsilon}/\log(n)\log\log(n)}\quad\text{ and }\quad e^{-c\log\log(n) n^{\epsilon}}. \]
 We illustrate the numerics of several options in Table \ref{table:stats-small}.

 We do observe that in Table \ref{table:smalloutliers} small outliers often have a large number of small prime factors which agrees with the expectation from \eqref{smallest} that having $\phi(n)/n$ small would increase the probability of this.
 
 It is also  important to note that the approximations in \eqref{smallest} are not all that good for small $n$ so that it is not so surprising Table \ref{table:stats-small} suggests these probabilities are significant over-estimates.
 \end{rmk}  
 
 Based on this we conjecture:
\begin{conj}\label{conjsmall}
Let $1>\epsilon > 0$. Then
 \begin{enumerate}
 \item As $y\to\infty$ we have
 \[  F_{\epsilon}(x,y)  <\!\!< \int_{x}^y e^{-n^{\epsilon}/\log(n)\log\log(n)} {\rm dn}  \]
 \item
  $\underset{n\to\infty}{\liminf} \frac{P(n)}{\phi(n)\log(n)\log(\phi(n))} = 1$.
    \end{enumerate}
  In particular, for all $1>\epsilon>0$ we have that $ F_\epsilon$ is finite.
\end{conj}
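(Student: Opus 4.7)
The plan is to derive both parts of Conjecture~\ref{conjsmall} from the per-$n$ heuristic probability~\eqref{smallest}, combined with a union-bound summation over $n$ and a density-zero comparison against the upper-outlier sets studied in Conjecture~\ref{conjbig}.

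For part~(1), I begin with~\eqref{smallest}, which proposes $e^{-\phi(n)/n^{1-\epsilon}}$ as the probability that $n \in F_\epsilon$. Using the classical estimate $\phi(n)/n >\!\!> 1/\log\log(n)$, this is bounded above by $e^{-n^\epsilon/\log\log(n)}$; the sharper denominator $\log(n)\log\log(n)$ appearing in the integrand of the conjecture would come from incorporating the constant-size corrections identified in Remark~\ref{rmk:error}, namely the distinction between $\log(X)$ and $\log(n)$ together with the higher-order terms in the expansion of $\log Y$ derived there. Assuming independence across reduced residue classes modulo $n$ (whose leading-order validity is defended in Remark~\ref{rmk:independence}, since shared small prime factors contribute at worst a factor $\prod_{p \mid n}(1 - 1/p)$), and treating the events $\{n \in F_\epsilon\}$ as independent in $n$, I sum the per-$n$ bound for $x < n < y$ and pass to an integral — legitimate because the integrand is eventually monotone decreasing — producing the claimed bound on $|F_\epsilon(x,y)|$.

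For part~(2), I split the liminf identity into two halves. The direction $\liminf \geq 1$ is an immediate consequence of~(1): the integral $\int^{\infty} e^{-n^\delta/(\log n \log\log n)}\,dn$ converges (the integrand decays super-polynomially), so $|F_\delta| < \infty$ for every $\delta > 0$, giving $P(n) \geq (1-\delta)\phi(n)\log(n)\log(\phi(n))$ for all sufficiently large $n$; letting $\delta \to 0$ gives the bound and simultaneously establishes the \emph{in particular} finiteness claim. For $\liminf \leq 1$ I use both conjectures in tandem: from the bounds in Conjecture~\ref{conjbig} the count $|E_\delta(1,y)|$ is $o(y)$ for every fixed $\delta > 0$ (polylogarithmic at $\delta \geq 1$, and of order $y^{1-\delta}\log(y)^2$ for $0 < \delta < 1$), while from part~(1) already established $|F_\delta(1,y)| = O(1)$. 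Hence the complementary set of $n \leq y$ for which $P(n)/(\phi(n)\log(n)\log(\phi(n))) \in (1-\delta, 1+\delta)$ has full density in $[1,y]$, and in particular is infinite. A diagonal argument letting $\delta \to 0$ then produces a subsequence along which the ratio tends to $1$, yielding $\liminf \leq 1$ and completing the identity.

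The main obstacle is that the entire derivation is heuristic. Independence across residue classes modulo $n$ is only approximately valid, and independence across distinct moduli is not supported by any known unconditional input; even controlling $P(n,a)$ from below uniformly in $a$ coprime to $n$ — which is the content of the $\liminf \geq 1$ direction — is presently far beyond GRH. Moreover the lower-tail estimate in~\eqref{smallest} is itself flagged as unreliable for small $n$ in the remark following that equation, so even executing the heuristic cleanly requires confirming that the upper and lower tail asymptotics match. A rigorous attack would presumably require a Gallagher-style Poisson model for low-lying zeros of Dirichlet $L$-functions, together with genuinely new cancellation input to rule out simultaneously small primes across all residue classes modulo $n$.
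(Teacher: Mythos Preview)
The statement is a conjecture, not a theorem, and the paper offers no proof; it presents only the motivating heuristic around equation~\eqref{smallest} and the remark immediately following it. Your proposal is therefore not competing against a rigorous argument but against the paper's own heuristic justification, and in that comparison your reasoning for part~(1) is essentially the paper's: start from the per-$n$ probability estimate~\eqref{smallest}, bound $\phi(n)/n^{1-\epsilon}$ below via the classical $\phi(n)/n \gg 1/\log\log n$, and pass from the sum to an integral. The paper is equally noncommittal about the precise denominator $\log(n)\log\log(n)$, listing it merely as one end of a plausible range, so your attribution of it to the corrections in Remark~\ref{rmk:error} is consistent with, though not more precise than, what the paper says. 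You are right to flag the entire derivation as heuristic and to identify independence across residue classes and across moduli as the unjustified inputs.

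Your treatment of part~(2) is more explicit than anything the paper provides. The paper simply states $\liminf = 1$ alongside the finiteness of $F_\epsilon$ without separating the two inequalities. Your observation that $\liminf \geq 1$ is immediate from the finiteness of each $F_\delta$, while $\liminf \leq 1$ follows because $E_\delta$ has density zero (by the bounds in Conjecture~\ref{conjbig}) and $F_\delta$ is finite, so the ratio lies in $(1-\delta,1+\delta)$ for a full-density set of $n$, is a clean and correct way to extract both directions from the two conjectures jointly. This makes transparent that the $\liminf = 1$ assertion is essentially a reformulation of Wagstaff's original claim that $P(n)\sim\phi(n)\log(n)\log(\phi(n))$ for most $n$.
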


 Based on Conjectures \ref{conjbig} and \ref{conjsmall} as well as Tables \ref{table:stats-large}, \ref{table:stats-small}, \ref{table:largeoutliers}, and \ref{table:smalloutliers} we can make the following further conjectures:
 \begin{conj}
 For all $n>3$ we have
 \[  P(n) < 3 \phi(n)\log(n)\log(\phi(n)) \]
 and for all $n>570$ we have
 \[  P(n) > 0.5 \phi(n)\log(n)\log(\phi(n)). \]
 \end{conj}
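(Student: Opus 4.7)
The plan is to combine the asymptotic information contained in Conjectures \ref{conjbig} and \ref{conjsmall} with a direct computational verification of $P(n)$ for all $n$ up to some explicit threshold. Since both displayed inequalities are already known to fail for some specific small $n$ (by the fact that the conjecture restricts to $n>3$ and $n>570$), the claim is really an ``all but finitely many plus a manageable exceptional range'' statement, which is the standard shape one attacks by reducing to a finite check.

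First I would handle the upper bound. Taking $\epsilon = 2$ in the notation of Conjecture \ref{conjbig}, part (4) asserts that $\limsup_n P(n)/(\phi(n)\log(n)\log(\phi(n))) = 2$, so $E_2$ is finite; equivalently, $P(n) < 3\phi(n)\log(n)\log(\phi(n))$ fails only for a finite set of $n$. To close the argument I would need an effective version of part (1) of that conjecture, giving an explicit $N_1$ beyond which $E_2 \cap [N_1,\infty) = \emptyset$. Then I would directly compute $P(n)$ for all $3 < n \leq N_1$ and verify the bound case by case, extending the $10^8$ computation referenced in the abstract up to $N_1$ if needed.

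For the lower bound I would proceed symmetrically with $\epsilon = 1/2$. Conjecture \ref{conjsmall}(2) gives $\liminf = 1$, so $F_{1/2}$ is finite; combined with an effective form of part (1) of that conjecture one obtains a threshold $N_2$ beyond which the bound $P(n) > 0.5\phi(n)\log(n)\log(\phi(n))$ is guaranteed. The remaining range $570 < n \leq N_2$ is then checked computationally, and in light of Remark \ref{rmk:independence} special attention should be paid to highly composite $n$ (those with $\phi(n)/n$ small), since these are exactly the candidates flagged by \eqref{smallest} as most likely to be small outliers and therefore the most dangerous near the threshold $570$.

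The main obstacle is of course that this reduces the problem to proving Conjectures \ref{conjbig} and \ref{conjsmall} themselves, and even beyond that, to extracting \emph{effective} thresholds $N_1$ and $N_2$ from them; the current heuristics in Section 2 give only the order of the probabilities and, as emphasized in Remark \ref{rmk:error}, potentially miss multiplicative factors of size $\log(n)^2$ or $1/\log(n)^{1/2}$. Without such effective bounds one cannot even in principle finish with a finite computation, so the honest status of the argument is that it is contingent: conditional on sharp, explicit versions of the previous two conjectures, the remaining content is a (large but in principle finite) verification, which the numerical tables suggest will go through with the stated constants $3$ and $0.5$ being the natural ones for this verification range.
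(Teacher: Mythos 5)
The statement you are asked about is itself labelled a conjecture in the paper: the authors offer no proof, only the justification that it is ``based on'' Conjectures \ref{conjbig} and \ref{conjsmall} together with the computational verification up to $10^8$ recorded in Tables \ref{table:largeoutliers} and \ref{table:smalloutliers} (where $n=3$ is the last upper-bound violation and $n=570$ the last lower-bound violation). Your proposal --- reduce to finitely many exceptions via effective forms of the two asymptotic conjectures and dispatch the rest by computation, while honestly flagging that the reduction is conditional and no effective thresholds are available --- is essentially the same justification the paper gives, and your assessment of its contingent status is accurate.
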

 \begin{rmk}
 One could certainly be bolder and predict that
 \begin{enumerate}
 \item for all $n>6$ that $P(n) < 2.4 \phi(n)\log(n)\log(\phi(n))$,
 \item for all $n>6840$ that  $P(n) > 0.6 \phi(n)\log(n)\log(\phi(n)) $, or
 \item for all $n>198660$  that  $P(n) > 0.7 \phi(n)\log(n)\log(\phi(n)) $.
 \end{enumerate}
 However, based on our expectations it is also reasonable to think that there may well be at least one further counterexample to each of these. For example predicting for all $n>1623$ that $P(n) < 2.2 \phi(n)\log(n)\log(\phi(n))$, seemed reasonable until we had checked $n$ past $8\cdot 10^6$.
 \end{rmk}

\section{Computing $P(n)$}

There are two natural approaches to computing $P(n)$.
\begin{enumerate}
\item For each $a<n$ with ${\rm gcd}(a,n)=1$ compute $P(n,a)$ by searching for a prime $p=a+kn$. 
         Computing $P(n,a)$ should involve checking $\log n$ many numbers and, conjecturally, a worst case of $(\log n)^2$.
         
         Verifying if a number is prime has running time of (at least) $(\log n)$.
         Consequently the algorithm has an expected running time of $O(\phi(n)(\log n)^2)$.
         
         One can perform a time/space trade off to make the primality test $O(1)$ at the expense of precomputing a list of primes. This requires $O(n(\log(n))^2)$ space. The expected running time becomes $O(\phi(n)(\log n))$.
         
\item Iterate over all primes and check if $p\pmod{n}$ is a new residue class until all residue classes are seen (ie: $\phi(n)$ of them).

         One expects to need to iterate over primes up to at most $2\phi(n)(\log(n))^2$, There are roughly $n\log(n)$ such primes, and it is beneficial to precompute this list if you are computing many values of $P(n)$.
         
         Assuming precomputation this approach has space complexity $n(\log(n))$ and running time $n\log(n)$.
         
\end{enumerate}     

One can also hybridize the two approaches:

         Iterate over primes until you have encountered all except $\phi(n)/\log(n)$ of the congruence classes.  This step has expected running time $O(\phi(n)\log(\phi(n)))$.
         
         For the remaining congruence classes compute $P(n,a)$ as above. This step has an expected running time $O(\phi(n)\log(n))$.

Regardless of the approach taken, the expected\footnote{this expectation is based on our conjectural answer to the problem being studied, this conjecture held in practice.} total running time to compute $P(n)$ up to $N$ is then $O(N^2(\log N))$ using $O(N(\log N)^2)$ space.

\begin{rmk} 
\begin{enumerate}
\item
All of these can be parallelized for different $n$, sharing any of the the precomputed prime lists. 
\item
The hybrid option was the fastest experimentally. The intuition is that it attempts to minimize the wait time to finding a new prime in a congruence class.
\item 
The time space trade offs become intractable for large $n$ but note that because one can use a bitset to record which $p<3N(\log(N))^2$ are prime actually storing the list of the first $O(N\log(N))$ primes will, for smaller values of $N$, actually be what uses more space.

\item
Our first implementation used PARI/GP, \cite{PARI2}, it took roughly two months of cpu time and $38Gb$ of ram on an Intel(R) Xeon(R) CPU E5-2690 to compute the values given up to $3\cdot 10^6$. 

We re-implemented the main parts of the algorithm in c, using the PARI c library. 
The improved version computed $P(n)$ for the range $3\cdot 10^6$  to $5\cdot 10^6$ million using $10.7$Gb in $137.9$h of cpu time. %(4.5-5: was 2578:27 minutes 3-4.5 was 5695:11)
We computed values from $5\cdot 10^6$ million to $10^7$ million using $23.5$Gb and $630.9$h on the same system.
We computed values from $10^7$ million to $1.5\cdot 10^7 $ million using $47.3$Gb and $1125.7$h on the same system.

For the final computations we needed to further optimize memory usage and improve parallelism.
We used $114.4Gb$ and approximately $38000$h of cpu time on the same system to compute the values between $5\cdot 10^7$ and $10^8$.\footnote{Frequent power outages caused us to lose metadata, but not the results, for the computations between $1.5\cdot 10^7$ and $5\cdot 10^7$}

There remains room to improve both the memory usage and the use of parallelism in our implementation and this would be desirable before pushing the calculations significantly further.
\end{enumerate}
\end{rmk}

         \newpage
         
\begin{landscape}
\begin{table}[h]
%\begin{sidewaystable}[h]
\centering
\caption{Counts of large outliers with comparisons to various plausible expectations. This table is for $N=10^8$.
Note that for $\epsilon=1$ the third, forth and fifth column's are respectively $\log(N)$, $1$, and $\log(N)\log\log(N)$.
The value in parenthesis is the ratio comparing to the true count.
}\label{table:stats-large}
\begin{tabular}{|r|r|rrrrrr|}
\hline
$\epsilon$ & 
$|E_\epsilon(3,N)|$  &  
$(1-\epsilon)(N^{1-\epsilon}-1)$ &
$(1-\epsilon)\frac{N^{1-\epsilon}-1}{\log(N)}$ &
$(1-\epsilon)\log\log(N)(N^{1-\epsilon} -1)$ &
$ \sum_{k=1}^N \frac{1}{\phi(k)^{\epsilon}}$ &
$ \sum_{k=1}^N \frac{1}{k^{\epsilon}}$ &
$ \sum_{k=1}^N \frac{\phi(k)}{k^{\epsilon+1}}$\\
\hline
$0.0$ & $63512264$ & $99999999.00$ ($1.57$) & $5428680.97$  ($0.09$) &$291347395.78$  ($4.59$) & $99999998.00$  ($1.57$) & $99999998.00$ ($1.57$)  & $60792708.76$ ($0.96$)\\
$0.1$ & $19112007$ & $17609923.25$ ($0.92$) & $955986.56$  ($0.05$) &$51306053.30$  ($2.68$) & $18677431.85$  ($0.98$) & $17609921.90$ ($0.92$)  & $10705548.77$ ($0.56$)\\
$0.2$ & $4124995$ & $3139856.79$ ($0.76$) & $170452.81$  ($0.04$) &$9147891.08$  ($2.22$) & $3538070.83$  ($0.86$) & $3139855.45$ ($0.76$)  & $1908803.23$ ($0.46$)\\
$0.3$ & $832336$ & $568723.10$ ($0.68$) & $30874.16$  ($0.04$) &$1656959.96$  ($1.99$) & $682012.03$  ($0.82$) & $568721.81$ ($0.68$)  & $345741.42$ ($0.42$)\\
$0.4$ & $167725$ & $105157.89$ ($0.63$) & $5708.69$  ($0.03$) &$306374.78$  ($1.83$) & $134431.65$  ($0.80$) & $105156.67$ ($0.63$)  & $63927.60$ ($0.38$)\\
$0.5$ & $34331$ & $19998.00$ ($0.58$) & $1085.63$  ($0.03$) &$58263.65$  ($1.70$) & $27298.10$  ($0.80$) & $19996.83$ ($0.58$)  & $12156.63$ ($0.35$)\\
$0.6$ & $7111$ & $3959.73$ ($0.56$) & $214.96$  ($0.03$) &$11536.58$  ($1.62$) & $5780.22$  ($0.81$) & $3958.62$ ($0.56$)  & $2406.56$ ($0.34$)\\
$0.7$ & $1551$ & $833.96$ ($0.54$) & $45.27$  ($0.03$) &$2429.73$  ($1.57$) & $1302.88$  ($0.84$) & $832.90$ ($0.54$)  & $506.35$ ($0.33$)\\
$0.8$ & $352$ & $194.05$ ($0.55$) & $10.53$  ($0.03$) &$565.37$  ($1.61$) & $323.90$  ($0.92$) & $193.04$ ($0.55$)  & $117.37$ ($0.33$)\\
$0.9$ & $88$ & $53.10$ ($0.60$) & $2.88$  ($0.03$) &$154.69$  ($1.76$) & $93.84$  ($1.07$) & $52.13$ ($0.59$)  & $31.70$ ($0.36$)\\
$1.0$ & $25$ & $18.42$ ($0.74$) & $1.00$  ($0.04$) &$53.67$  ($2.15$) & $33.74$  ($1.35$) & $17.50$ ($0.70$)  & $10.65$ ($0.43$)\\
$1.1$ & $14$ & $8.42$ ($0.60$) & $0.46$  ($0.03$) &$24.52$  ($1.75$) & $15.51$  ($1.11$) & $7.53$ ($0.54$)  & $4.59$ ($0.33$)\\
$1.2$ & $9$ & $4.87$ ($0.54$) & $0.26$  ($0.03$) &$14.20$  ($1.58$) & $8.84$  ($0.98$) & $4.03$ ($0.45$)  & $2.46$ ($0.27$)\\
$1.3$ & $4$ & $3.32$ ($0.83$) & $0.18$  ($0.05$) &$9.67$  ($2.42$) & $5.85$  ($1.46$) & $2.51$ ($0.63$)  & $1.53$ ($0.38$)\\
$1.4$ & $3$ & $2.50$ ($0.83$) & $0.14$  ($0.05$) &$7.28$  ($2.43$) & $4.27$  ($1.42$) & $1.73$ ($0.58$)  & $1.05$ ($0.35$)\\
$1.5$ & $3$ & $2.00$ ($0.67$) & $0.11$  ($0.04$) &$5.83$  ($1.94$) & $3.31$  ($1.10$) & $1.26$ ($0.42$)  & $0.77$ ($0.26$)\\
\hline
\end{tabular}
\end{table}

%Prime Moduli
%$0.0$ & $4978467$ & $5428680.97$ ($1.09$) \\
%$0.1$ & $1870690$ & $955986.56$ ($0.51$) \\
%$0.2$ & $430638$ & $170452.81$ ($0.40$) \\
%$0.3$ & $89474$ & $30874.16$ ($0.35$) \\
%$0.4$ & $18229$ & $5708.69$ ($0.31$) \\
%$0.5$ & $3852$ & $1085.63$ ($0.28$) \\
%$0.6$ & $792$ & $214.96$ ($0.27$) \\
%$0.7$ & $195$ & $45.27$ ($0.23$) \\
%$0.8$ & $40$ & $10.53$ ($0.26$) \\
%$0.9$ & $15$ & $2.88$ ($0.19$) \\
%$1.0$ & $6$ & $2.91$ ($0.49$) \\
%$1.1$ & $4$ & $0.46$ ($0.11$) \\
%$1.2$ & $2$ & $0.26$ ($0.13$) \\
%$1.3$ & $2$ & $0.18$ ($0.09$) \\
%$1.4$ & $1$ & $0.14$ ($0.14$) \\
%$1.5$ & $1$ & $0.11$ ($0.11$) \\
%$1.6$ & $1$ & $0.09$ ($0.09$) \\
%$1.7$ & $1$ & $0.08$ ($0.08$) \\
%$1.8$ & $1$ & $0.07$ ($0.07$) \\
%$1.9$ & $1$ & $0.06$ ($0.06$) \\
%$2.0$ & $1$ & $0.05$ ($0.05$) \\
%$2.1$ & $1$ & $0.05$ ($0.05$) \\
%$2.2$ & $1$ & $0.05$ ($0.05$) \\
%$2.3$ & $1$ & $0.04$ ($0.04$) \\
%$2.4$ & $1$ & $0.04$ ($0.04$) \\
%$2.5$ & $1$ & $0.04$ ($0.04$) \\
%$2.6$ & $1$ & $0.03$ ($0.03$) \\
%$2.7$ & $1$ & $0.03$ ($0.03$) \\
%$2.8$ & $1$ & $0.03$ ($0.03$) \\
%$2.9$ & $1$ & $0.03$ ($0.03$) \\
%$3.0$ & $1$ & $0.03$ ($0.03$) \\
%$3.1$ & $1$ & $0.03$ ($0.03$) \\
%$3.2$ & $1$ & $0.02$ ($0.02$) \\
%$3.3$ & $1$ & $0.02$ ($0.02$) \\
%$3.4$ & $1$ & $0.02$ ($0.02$) \\
%$3.5$ & $1$ & $0.02$ ($0.02$) \\

%$0.0$ & $782987$ &$36787938.54$ ($46.98$)  \\
%$0.1$ & $1816$ &$387839.71$ ($213.57$)  \\
%$0.2$ & $43$ &$116.27$ ($2.70$)  \\
%$0.3$ & $4$ &$6.33$ ($1.58$)  \\
%$0.4$ & $0$ &$1.04$ ($0.00$)  \\

\begin{table}[h]
\caption{Counts of small outliers with comparisons to our conjectured assymptotic bound. This table is for $N=10^8$.}\label{table:stats-small}
{
% [inline block 0: 3 envs, 146259 chars -> data_tex | \begin{tabular}{|r|r|rrr|} \hline...]

}

\end{document}